\documentclass[11pt]{amsart}
\usepackage[a4paper,textwidth=155mm,textheight=248mm]{geometry}
\usepackage[T1]{fontenc}
\usepackage{lmodern,microtype,amsmath,amssymb,mathtools}
\usepackage[hidelinks]{hyperref}
\hypersetup{
  pdftitle={Half-sized gaps between zeros of the Riemann zeta function},
  pdfauthor={Andriy Bondarenko and Winston Heap},
  pdfsubject={Small gaps between zeros of the Riemann zeta function},
  pdfkeywords={Riemann hypothesis, zeta zeros, small gaps, resonance}
}

\newtheorem{theorem}{Theorem}
\newtheorem{lemma}{Lemma}
\DeclareMathOperator{\Ree}{Re}
\DeclareMathOperator{\Imm}{Im}
\newcommand{\R}{\mathbb R}

\title[Half-sized gaps between zeta zeros]{Half-sized gaps between zeros of the Riemann zeta function}
\author{Andriy Bondarenko}
\author{Winston Heap}
\address{Department of Mathematical Sciences, Norwegian University of Science and
Technology (NTNU), 7491 Trondheim, Norway}
\email{andriybond@gmail.com}
\email{winstonheap@gmail.com}
\date{}
\subjclass[2020]{11M26}
\keywords{Riemann zeta function, small gaps, Riemann hypothesis, resonance}

\begin{document}
\begin{abstract}
Assuming the Riemann hypothesis, we prove that the lower limit of the
normalised gaps between consecutive zeros of the Riemann zeta function
is at most $1/2$. Our methods utilise a more efficient method for detecting gaps with short Dirichlet polynomials. Specifically, we apply resonance to a quotient of shifted zeta
functions, rather than to the number of zeros in a short interval,
using a M\"obius resonator adapted to the symmetry between the shifts.
\end{abstract}
\maketitle

\section{Introduction}

Let $0<\gamma_1\leq\gamma_2\leq\cdots$ denote the positive ordinates
of the nontrivial zeros of the Riemann zeta function, counted with
multiplicity. The average spacing of consecutive zeros at height $T$ is asymptotic to $2\pi/\log T$ and thus it is natural to ask how much one can deviate from this. Regarding smaller gaps, one defines
\[
 \mu=\liminf_{n\to\infty}
       \frac{(\gamma_{n+1}-\gamma_n)\log\gamma_n}{2\pi}.
\]
It is conjectured that $\mu=0$, that is, consecutive gaps can be
arbitrarily small in comparison with their average.

Assuming the Riemann hypothesis (RH), Conrey, Ghosh and Gonek \cite{CGG} showed that $\mu<0.5172$. Later, improvements of Bui, Milinovich and
Ng~\cite{BMN} gave $\mu<0.5155$; then to
$\mu<0.5154$ by Feng and Wu~\cite{FW}, and to $\mu<0.515396$ by
Preobrazhenski\u{\i}~\cite{P}. More recently, Inoue~\cite{Inoue}
obtained $\mu<0.50895$. These bounds were obtained by different
variations of what can be thought of as the resonance method, beginning with the
method of Montgomery and Odlyzko~\cite{MO}. Here, one utilises a Dirichlet polynomial to pick out non-zero values of the counting function of the number of zeros in a short interval.  
Prior to Inoue, much work sought to optimise the Dirichlet polynomial coefficients in this method;
Inoue's improvement altered the initial setup, combining resonance techniques with a quadratic expression in the
number of zeros in a short interval, overcoming previous inefficiencies. For related results on positive
proportions of small spacings and on gaps between distinct zeros,
see Bui, Goldston, Milinovich and Montgomery~\cite{BGMM}.

It is now well known that there is a limitation to the usual Montgomery--Odlyzko criterion.
Goldston, Trudgian and Turnage-Butterbaugh~\cite{GTT} showed that,
when the Dirichlet polynomial has length $T^{1-\varepsilon}$ with
fixed $\varepsilon>0$, no choice of coefficients allows this
criterion to reach the half-spacing bound, with further refinements of Inoue, Kobayashi and Toma \cite{IKT} pushing the limitations of the method to $\mu<0.508$. Here we prove the following.

\begin{theorem}\label{thm:main}
Assume the Riemann hypothesis. Then $\mu\leq1/2$.
\end{theorem}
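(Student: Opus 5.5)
We argue by contradiction. Suppose that for some fixed $\delta>0$ and all large $T$, every pair of consecutive ordinates $\gamma_n\le\gamma_{n+1}$ in $[T,2T]$ satisfies $\gamma_{n+1}-\gamma_n\ge(1+\delta)\pi/\log T$, i.e.\ normalised gaps at least $(1+\delta)/2$. Set $h=\pi/\log T$ (half the mean spacing) and look at the quotient of shifted zeta functions
\[
 F(t)=\frac{\zeta(\tfrac12+i(t+h/2))}{\zeta(\tfrac12+i(t-h/2))}.
\]
Under RH, write $\zeta(\tfrac12+it)=e^{-i\theta(t)}Z(t)$ with $Z$ real. Then $F(t)=e^{-i(\theta(t+h/2)-\theta(t-h/2))}\,Z(t+h/2)/Z(t-h/2)$. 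Since $\theta(t+h/2)-\theta(t-h/2)=\tfrac{h}{2}\log\tfrac{t}{2\pi}+O(1/t)=\pi/2+o(1)$, we get $F(t)=\mp i(1+o(1))|F(t)|$, where the sign is that of $Z(t+h/2)/Z(t-h/2)$. That sign is $(-1)^{N}$, with $N$ the number of zeros in $(t-h/2,t+h/2]$. Under the gap hypothesis this window has length $h<(1+\delta)h$, so $N\in\{0,1\}$. Consequently $\Imm F(t)\cdot(\text{sign})$ is controlled: the quantity $iF(t)$ is \emph{real and of a definite sign pattern} determined by whether $N=0$ or $1$. The idea is to find a resonator $R(t)=\sum_{n\le T^{1-\varepsilon}}r(n)n^{-it}$ for which a weighted moment of $F$ would force $N\ge2$ somewhere, which is impossible.

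Concretely, I would compare two mean values over $t\in[T,2T]$, weighted by $|R(t)|^2$ (times a smooth cutoff):
\[
 I_1=\int \Ree\bigl(iF(t)\bigr)\,|R(t)|^2\,dt,\qquad I_2=\int |R(t)|^2\,dt .
\]
Rather than $F$ itself, which has poles, I would work with $G(t)=\zeta(\tfrac12+i(t+h/2))\,\overline{\zeta(\tfrac12+i(t-h/2))}\,\cdot$(mollifier-type weight) or, better, replace the denominator by a Möbius resonator. Taking $R(t)$ with coefficients built from $\mu(n)$ twisted by $n^{\pm ih/2}$, the product $R(t)\zeta(\tfrac12+i(t-h/2))$ approximates $1$. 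This is the symmetry between the shifts mentioned in the abstract: one uses $R_+$ and $R_-$ adapted to $\pm h/2$. Then the integrand $\zeta(\tfrac12+i(t+h/2))\overline{\zeta(\tfrac12+i(t-h/2))}\,|M(t)|^2$ equals $|\text{stuff}|^2\cdot(\mp i)$ pointwise, with the sign $(-1)^N$. Under the hypothesis $N\le1$ everywhere, and I would aim to show that the real part of $i\times$ this mean value, normalised by a positive mean value, lies in an interval forced by $N\in\{0,1\}$. The evaluation is then a twisted second moment of zeta with Möbius-type coefficients, of length $T^{\theta}$, $\theta<1$. This follows from standard results (Balasubramanian--Conrey--Heath-Brown, extended to shifts) and reduces to a main term that is a ratio of explicit integrals in $\theta$ and $h\log T=\pi$.

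The key computation is that as $\theta\to1$ the main term becomes $-\tfrac{\sin(\pi\cdot\text{(normalised shift)})}{\ldots}$-type expressions. The boundary case, shift equal to exactly half the mean spacing, gives a value that violates the sign constraint for any shift $>1/2$ of the mean spacing. Letting $\theta\to1$ and $\delta\to0$ then yields $\mu\le1/2$. The main obstacle is this step: the choice of Möbius resonator coefficients and the exact main-term computation of the shifted twisted second moment. These must be arranged so that the limit $\theta\to1$ lands exactly at the threshold $1/2$, avoiding the Goldston--Trudgian--Turnage-Butterbaugh barrier, which applies only to counting-function criteria. I would first try the simplest coefficients $r(n)=\mu(n)n^{-1/2}P(\log(y/n)/\log y)$ with a linear $P$, verify the sign inequality, and only optimise $P$ if needed.
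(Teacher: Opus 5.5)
There is a genuine gap, and it sits exactly where you expect the contradiction. On the critical line, $\zeta(\tfrac12+i(t+h/2))\overline{\zeta(\tfrac12+i(t-h/2))}\approx -i\,Z(t+h/2)Z(t-h/2)$ records only the sign $(-1)^N$, i.e.\ the \emph{parity} of $N$. The hypothesis $N\in\{0,1\}$ places no restriction on parity, since $N=2$ gives the same sign as $N=0$. Under the gap hypothesis your integral therefore still has the form $\int_{N\ \mathrm{even}}-\int_{N\ \mathrm{odd}}$ of a nonnegative weight, exactly as it does unconditionally. After normalising by a positive mean value such as $\bigl(\int|\zeta(\tfrac12+i(t+h/2))M|^2\int|\zeta(\tfrac12+i(t-h/2))M|^2\bigr)^{1/2}$, it lies in $[-1,1]$ by Cauchy--Schwarz whatever the gaps are. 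This sign pattern is your only use of $N\le1$, and you evaluate the twisted second moment unconditionally. So the hypothesis never enters a quantity with content, and no choice of $P$ or $\theta$ can produce a contradiction. This is not an artefact of your particular quotient. Even for the paper's $F$, the bound $|F(\tfrac12+it)|=1+O(1/\log T)$ on the line is essentially automatic, because there $e^{i\pi(c-1/2)}\zeta(s+i\pi c/\log T)/\zeta(s-i\pi c/\log T)$ is $-i$ times a real number, up to a phase $O(1/\log T)$.

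What is missing is using $N\le1$ \emph{off} the line. The paper takes a window of normalised length $c\in(\tfrac12,\min(1,\mu))$ and forms $F=-i(A-B)/(A+B)$, with $A=e^{i\pi(c-1/2)}\zeta(s+i\pi c/\log T)$ and $B=\zeta(s-i\pi c/\log T)$. It writes $F=\tan(\frac\pi2(z-\frac12))$, where, by the Hadamard product and Stirling's formula, $\Ree z$ is a Poisson average of $N_c$ up to $O(1/\log T)$. The gap hypothesis keeps $\Ree z$ within $O(1/\log T)$ of $[0,1]$ throughout $\frac12<\Ree s\le2$. Hence $F$ has no poles there (they occur only at $z=\frac32+2k$), and $|F|\le1+6\pi/\log T$.

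This holomorphy is where the hypothesis bites. It lets $\int FR\overline{Q}W$ be moved to $\Ree s=2$ without residues, and there the identity $F\sum r(n)n^{-s}=\sum q(n)n^{-s}$ identifies the diagonal as $\sum q(n)^2/n$. No twisted moment of $\zeta$ is needed, because $R,Q$ have length $T^{1/(2c)}=o(T/\log T)$ and the Gaussian weight kills the off-diagonal terms. Cauchy--Schwarz then gives $\sum(q(n)^2-r(n)^2)/n\le O(1)$. This contradicts the evaluation $-\frac{6\cos\pi c}{\pi^3c}\log T+O(1)\to+\infty$.

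Note also that $c>\tfrac12$ is essential. With the window at exactly half the mean spacing and your length $T^\theta$, the same coefficient sum equals $-\frac{6\sin\pi\theta}{\pi^3}\log T+O(1)\le O(1)$. So letting $\theta\to1$ reaches only the borderline, never beyond it.
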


Our approach is somewhat different. In the classical method one
seeks to make a weighted average of $N_c(u)$, the number of zeros in
an interval of normalised length $c$ centred at $u$, exceed one. We instead work with
the quotient
\begin{equation}\label{eq:F}
 F(s)=-i\,
 \frac{e^{i\pi(c-1/2)}\zeta(s+i\pi c/\log T)
                       -\zeta(s-i\pi c/\log T)}
      {e^{i\pi(c-1/2)}\zeta(s+i\pi c/\log T)
                       +\zeta(s-i\pi c/\log T)},
\end{equation}
where, under the contrary assumption $\mu>1/2$, we choose
$1/2<c<\min(1,\mu)$. The gap assumption gives $N_c(u)\leq1$,
which, as we shall see, implies the bound $|F|\leq1+o(1)$ in a suitable rectangle.
Utilising resonance allows us to, essentially, exhibit larger values than this, giving the required contradiction. It is the use of this quotient, rather than a refinement of the usual
weighted average of $N_c(u)$, that allows our improvements. 

We give the notation and state the two lemmas needed for the proof
in Section~2. The theorem follows immediately from them. Their proofs
are given in Section~3.

\medskip
\noindent\textbf{Acknowledgments.} A. Bondarenko was supported in
part by Grant 334466 of the Research Council of Norway. The authors
gratefully acknowledge the assistance of OpenAI's ChatGPT, whose
exploratory discussions, calculations and numerics helped stimulate
the development of this paper.

\section{Preliminaries and the main idea}

Throughout the proof we assume RH and, seeking a contradiction,
suppose that $\mu>1/2$. Fix
\begin{equation}\label{eq:c}
 \frac12<c<\min(1,\mu),
\end{equation}
and let $T$ tend to infinity.

For $v\in\R$, let $N_c(v)$ count the zero ordinates in
\[
 \left[v-\frac{\pi c}{\log T},\,
       v+\frac{\pi c}{\log T}\right],
\]
including negative ordinates and counting multiplicities. For
sufficiently large $T$, the condition $c<\mu$ gives
\begin{equation}\label{eq:binary}
 0\leq N_c(v)\leq1\qquad(T/2\leq v\leq3T).
\end{equation}
Indeed, two zeros in such an interval would give a consecutive gap
of normalised size at most $c+O(1/\log T)$, contradicting $c<\mu$.

Now, the quotient $F$ in~\eqref{eq:F} can be written as
\begin{equation}\label{eq:tangent}
 F(s)=\tan\left(\frac{\pi}{2}\left(z(s)-\frac12\right)\right),
\end{equation}
where, to the right of the critical line at the heights under
consideration,
\begin{equation}\label{eq:z}
 z(s)=c-\frac{i}{\pi}
       \bigl(\log\zeta(s+i\pi c/\log T)
                    -\log\zeta(s-i\pi c/\log T)\bigr).
\end{equation}
The logarithms are the standard continuations from the Euler product, and RH allows us to claim their holomorphy up to the line $\Re(s)=1/2$.
As we shall see, $\Ree z(s)$ is, up to $O(1/\log T)$, a Poisson
average of $N_c(v)$. This follows by the usual arguments involving the Hadamard product and Stirling's formula
applied to the completed zeta function
\[
 \xi(s)=\tfrac12s(s-1)\pi^{-s/2}\Gamma(s/2)\zeta(s).
\]
From this we will see that~\eqref{eq:binary} places $\Ree z(s)$ between zero and one,
apart from a small error. Then since the tangent in~\eqref{eq:tangent} maps
the strip $0<\Ree z<1$ onto the unit disc, we roughly acquire $|F(s)|\leq 1$. We sum these properties up in the following Lemma.

\begin{lemma}\label{lem:bound}
Assume RH and fix $1/2<c<\min(1,\mu)$. For sufficiently large $T$,
the function $F$ is holomorphic in
\[
 \frac12<\Ree s\leq2,\qquad
 \frac{3T}{4}\leq\Imm s\leq\frac{9T}{4},
\]
extends continuously to the left edge, and satisfies
\begin{equation}\label{eq:Fbound}
 |F(s)|\leq1+\frac{6\pi}{\log T}
\end{equation}
uniformly on the closed rectangle.
\end{lemma}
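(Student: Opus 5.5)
We will prove Lemma~\ref{lem:bound} by computing $\Ree z(s)$ explicitly from the Hadamard product. Then we check that it lies in $[-O(1/\log T),\,1+O(1/\log T)]$ and transfer this to $|F|$ through the tangent representation~\eqref{eq:tangent}.

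\emph{Step 1: $\Ree z$ as a Poisson average.} Write $s=\sigma+it$ with $\sigma>1/2$, and set $h=\pi c/\log T$. From~\eqref{eq:z},
\[
\Ree z(s)=c+\frac1\pi\bigl(\arg\zeta(s+ih)-\arg\zeta(s-ih)\bigr)=c+\frac1\pi\int_{-h}^{h}\Ree\frac{\zeta'}{\zeta}(s+iu)\,du .
\]
Here we used $\partial_u\arg\zeta(s+iu)=\Ree(\zeta'/\zeta)(s+iu)$. Next insert the Hadamard formula, valid under RH:
\[
\Ree\frac{\zeta'}{\zeta}(s)=\sum_\gamma\frac{\sigma-1/2}{(\sigma-1/2)^2+(t-\gamma)^2}-\frac12\log\frac{t}{2\pi}+O(1/t).
\]
This comes from $\Ree\frac{\xi'}{\xi}(s)=\sum_\rho\Ree\frac1{s-\rho}$ together with Stirling's formula for the Gamma factor. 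Integrating over $u\in[-h,h]$, the smooth term contributes $-\frac{2h}{2\pi}\log\frac{t}{2\pi}=-c+O(1/\log T)$, which cancels the constant $c$ for $t\asymp T$. The sum over zeros contributes
\[
\frac1\pi\sum_\gamma\Bigl(\arctan\frac{t+h-\gamma}{\sigma-1/2}-\arctan\frac{t-h-\gamma}{\sigma-1/2}\Bigr)=\int_{\R}N_c(v)\,P_{\sigma-1/2}(t-v)\,dv,
\]
where $P_a(x)=\frac{a}{\pi(a^2+x^2)}$ is the Poisson kernel. Hence
\[
\Ree z(s)=\int_\R N_c(v)P_{\sigma-1/2}(t-v)\,dv+O(1/\log T).
\]
The bookkeeping of the $O(1/t)$ and $\log$ terms must be done with explicit constants to reach the constant $6\pi$. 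I expect the error here to be at most about $3/\log T$, or whatever size makes the final constant come out.

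\emph{Step 2: bounding the average.} On the range $T/2\le v\le 3T$ we use~\eqref{eq:binary}, so $0\le N_c\le 1$. Outside this range we use the trivial bound $N_c(v)\ll\log(|v|+2)$. For $\sigma\le2$ and $3T/4\le t\le 9T/4$, the Poisson tail beyond distance $\gg T$ contributes $O(\log T/T)$. Since $P$ has total mass one, this gives
\[
-\frac{C}{\log T}\le\Ree z(s)\le1+\frac{C}{\log T}.
\]
This also keeps $z(s)$ away from the poles of the tangent near $\Ree z\in\{0,1\}$, up to $O(1/\log T)$.

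\emph{Step 3: from the strip to $|F|$.} Holomorphy of $F$ needs the denominator of~\eqref{eq:F} to be nonzero. This is equivalent to $\cos(\frac\pi2(z-\frac12))\neq0$, i.e.\ $\Ree z\notin 3/2+2\mathbb Z$, which holds by Step~2. Holomorphy of $z$ itself for $\sigma>1/2$ holds because RH makes $\log\zeta$ holomorphic there.

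For the bound, put $w=\frac\pi2(z-\frac12)=x+iy$. Then
\[
|\tan w|^2=\frac{\sin^2x+\sinh^2y}{\cos^2x+\sinh^2y}.
\]
This is at most $1$ when $|x|\le\pi/4$, and for $|x|=\pi/4+\delta$ it is at most $\tan^2(\pi/4+\delta)\le(1+3\delta)^2$ for small $\delta$. With $\delta=\frac\pi2\cdot C/\log T$ this yields $|F|\le1+\frac{3\pi C}{2\log T}$, so the target $6\pi$ requires $C\le4$.

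Continuity up to $\sigma=1/2$ requires more care, and it is the main obstacle. On the critical line, $\zeta(s\pm ih)$ may vanish. Then $F$ should be understood as a limit of the quotient~\eqref{eq:F}, which stays bounded because the bound $|F|\le1+o(1)$ is uniform as $\sigma\to1/2^+$. At a zero of one shift, $F\to\pm i$. If both shifts vanished at the same point, two zeros would lie at distance $2h$, contradicting~\eqref{eq:binary}. So the quotient is a ratio of functions continuous on the closed rectangle with nonvanishing denominator, and continuity follows.

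The main difficulty is thus twofold: controlling the explicit constants in Step~1 uniformly in $\sigma\in(1/2,2]$, and ensuring uniformity as $\sigma\to1/2$, where the Poisson kernel degenerates to a point mass but $N_c\in\{0,1\}$ still bounds the average.
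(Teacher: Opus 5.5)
Your proposal is correct and follows the paper's proof essentially step for step. The common steps are: $\Ree z$ as a Poisson average of $N_c$ via the Hadamard product and Stirling; the gap bound $0\le N_c\le1$ on $[T/2,3T]$ plus an $O(\log T/T)$ tail; the tangent mapping the strip into the disc; and removability on the critical line from the uniform bound from the right. The constant you left open follows at once from your own Step~1: the leftover smooth term is $\frac{c}{\log T}\log\frac{2\pi T}{t}+O\bigl(\frac{1}{T\log T}\bigr)$ with $0<c\log\frac{2\pi T}{t}\le c\log\frac{8\pi}{3}<3c$ for $3T/4\le t\le 9T/4$, so $C=3c<4$ works, exactly as in the paper.
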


We now consider $F$ in the mean, using the resonance method to pick out large values. This will eventually lead to a contradiction with the pointwise bounds above. 
As is typical, we use a short M\"obius resonator. Let $\mu(n)$ denote the
M\"obius function. For every $n\in\mathbb N$, define
\begin{align}
 r(n)&=\mu(n)\cos\left(\frac{\pi}{2}\left(c-\frac12\right)
                              +\frac{\pi c\log n}{\log T}\right),
       \label{eq:r}\\
 q(n)&=\mu(n)\sin\left(\frac{\pi}{2}\left(c-\frac12\right)
                              +\frac{\pi c\log n}{\log T}\right).
       \label{eq:q}
\end{align}
We consider the corresponding Dirichlet polynomials
\begin{equation}\label{eq:polynomials}
 R(s)=\sum_{n\leq T^{1/(2c)}}\frac{r(n)}{n^s},
 \qquad
 Q(s)=\sum_{n\leq T^{1/(2c)}}\frac{q(n)}{n^s}.
\end{equation}
To see why these coefficients are natural, observe that, for
$\Ree s>1$,
\begin{align}
 \sum_{n\geq1}\frac{r(n)}{n^s}
   &=\frac12\left(
       \frac{e^{i\pi(c-1/2)/2}}{\zeta(s-i\pi c/\log T)}
       +\frac{e^{-i\pi(c-1/2)/2}}{\zeta(s+i\pi c/\log T)}
                \right),\label{eq:reciprocalR}\\
 \sum_{n\geq1}\frac{q(n)}{n^s}
   &=\frac1{2i}\left(
       \frac{e^{i\pi(c-1/2)/2}}{\zeta(s-i\pi c/\log T)}
       -\frac{e^{-i\pi(c-1/2)/2}}{\zeta(s+i\pi c/\log T)}
                \right).\label{eq:reciprocalQ}
\end{align}
The function $F$ has an absolutely convergent Dirichlet series
\[
 F(s)=\sum_{n\geq1}\frac{a(n)}{n^s},\qquad \Ree s\geq2.
\]
By~\eqref{eq:F}, \eqref{eq:reciprocalR} and~\eqref{eq:reciprocalQ},
\begin{equation}\label{eq:identity}
 F(s)\sum_{n\geq1}\frac{r(n)}{n^s}
       =\sum_{n\geq1}\frac{q(n)}{n^s}
       \qquad(\Ree s\geq2).
\end{equation}
Therefore the coefficient of $n^{-s}$ in $F(s)R(s)$
equals $q(n)$ for every $n\leq T^{1/(2c)}$.

In our integrals, we use the Gaussian weight
\begin{equation}\label{eq:weight}
 W(t)=\frac{\log T}{\sqrt{2\pi}\,T}
       \exp\left(-\frac{(t-3T/2)^2(\log T)^2}{2T^2}\right).
\end{equation}
Note that $ W(t)\geq 0$, $\widehat{W}(0)= \int_\R W(t)\,dt=1$ and 
\[
 \int_{\R\setminus[3T/4,9T/4]}W(t)\,dt
       \ll e^{-(\log T)^2/4}.
\]

Now, for comparison, the Montgomery--Odlyzko criterion requires the
following quotient to exceed one; see~\cite[\S2]{BMN}
and~\cite[Theorem~3]{Inoue}. For our choice of $R$, however,
\[
 \begin{aligned}
 \frac{\displaystyle\int_\R N_c(t)|R(1/2+it)|^2W(t)\,dt}
      {\displaystyle\int_\R|R(1/2+it)|^2W(t)\,dt}
 &=c+\frac1\pi\int_0^\pi\frac{\sin v}{v}\,dv
       -\frac{2}{\pi\bigl(\pi+2\cos(\pi c)\bigr)}+o(1)\\
 &\longrightarrow
   \frac12+\frac1\pi\int_0^\pi\frac{\sin v}{v}\,dv
       -\frac{2}{\pi^2}
   =0.8868475\ldots<1,
 \end{aligned}
\]
as $T\to\infty$ and then $c\downarrow1/2$.
We instead integrate $FR\overline Q$ against $W$.
The following lemma is very natural in view of~\eqref{eq:identity}.

\begin{lemma}\label{lem:mean}
Assume RH and fix $1/2<c<\min(1,\mu)$. As $T\to\infty$,
\begin{equation}\label{eq:mixed}
 \int_{3T/4}^{9T/4}F(1/2+it)R(1/2+it)
       \overline{Q(1/2+it)}W(t)\,dt
   =\sum_{n\leq T^{1/(2c)}}\frac{q(n)^2}{n}+o(1).
\end{equation}
Moreover,
\begin{align}
 \int_\R|R(1/2+it)|^2W(t)\,dt
   &=\sum_{n\leq T^{1/(2c)}}\frac{r(n)^2}{n}+o(1),
       \label{eq:Rnorm}\\
 \int_\R|Q(1/2+it)|^2W(t)\,dt
   &=\sum_{n\leq T^{1/(2c)}}\frac{q(n)^2}{n}+o(1),
       \label{eq:Qnorm}
\end{align}
Finally,
\begin{equation}\label{eq:coeffnorms}
 \sum_{n\leq T^{1/(2c)}}\frac{q(n)^2-r(n)^2}{n}
   =-\frac{6\cos(\pi c)}{\pi^3c}\log T+O(1).
\end{equation}
\end{lemma}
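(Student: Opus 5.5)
The plan is to prove the four assertions separately, treating \eqref{eq:Rnorm}, \eqref{eq:Qnorm} and \eqref{eq:coeffnorms} as routine and concentrating on \eqref{eq:mixed}.

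\emph{The mean values \eqref{eq:Rnorm} and \eqref{eq:Qnorm}.} These follow by expanding $|R(1/2+it)|^2=\sum_{m,n}r(m)r(n)(mn)^{-1/2}(n/m)^{it}$ and integrating against $W$. Since $\widehat W(\xi)=e^{-3iT\xi/2}\exp(-T^2\xi^2/(2(\log T)^2))$, the off-diagonal terms carry the factor
\[
 \exp\bigl(-T^2\log^2(m/n)/(2(\log T)^2)\bigr).
\]
For $m\neq n\leq T^{1/(2c)}$ with $c>1/2$ we have $|\log(m/n)|\gg T^{-1/(2c)}$. Hence $T|\log(m/n)|/\log T\geq T^{\delta}$, the off-diagonal contribution is $O(T^{-A})$, and only the diagonal survives.

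\emph{The coefficient sum \eqref{eq:coeffnorms}.} Note that
\[
 q(n)^2-r(n)^2=-\mu(n)^2\cos\bigl(\pi(c-1/2)+2\pi c\log n/\log T\bigr).
\]
Apply partial summation with $\sum_{n\le x}\mu(n)^2/n=\frac{6}{\pi^2}\log x+O(1)$, and substitute $u=\log n/\log T\in[0,1/(2c)]$. This gives
\[
 -\frac{6}{\pi^2}\log T\int_0^{1/(2c)}\cos\bigl(\pi(c-1/2)+2\pi cu\bigr)\,du.
\]
The integral equals $\frac{1}{2\pi c}\bigl[\sin(\pi(c+1/2))-\sin(\pi(c-1/2))\bigr]=\frac{\cos\pi c}{\pi c}$. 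This yields the stated main term, with error $O(1)$ because the integrand has bounded variation.

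\emph{The mixed moment \eqref{eq:mixed}.} This is the main step. The idea is to shift the contour from $\Ree s=1/2$ to $\Ree s=2$, or first to $\Ree s=1+1/\log T$, inside the rectangle of Lemma~\ref{lem:bound}. I will replace $W(t)$ by a holomorphic weight $W(-i(s-1/2))$, which is entire. Then $G(s)=F(s)R(s)\overline{Q}(1-s)\,W(-i(s-1/2))$ is holomorphic in the rectangle by Lemma~\ref{lem:bound}, continuous up to the left edge, and $\overline Q(1-s)$ agrees with $\overline{Q(1/2+it)}$ on the critical line.

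The horizontal edges at $\Imm s=3T/4$ and $9T/4$ are negligible. There $|F|\le2$, $R$ and $Q$ are polynomially bounded for $\Ree s\le2$, and the Gaussian is of size $e^{-c(\log T)^2}$ in modulus. Here one checks that $|W(t+i\sigma)|\le W(t)e^{\sigma^2(\log T)^2/(2T^2)}$, which is harmless for bounded $\sigma$.

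On the line $\Ree s=2$, I expand $F(s)R(s)=\sum_n b(n)n^{-s}$ with $b(n)=q(n)$ for $n\le T^{1/(2c)}$, using \eqref{eq:identity}. I also expand $\overline Q(1-s)=\sum_m q(m)m^{s-1}$. Integrating term by term, the transform of $W$ kills all pairs with $m\neq n$ and $\min(m,n)\le T^{1/(2c)}$, by the same spacing argument as before. The diagonal gives $\sum_{n\le T^{1/(2c)}}q(n)^2/n$; note that the $n^{-s}m^{s-1}$ powers combine to $n^{-1}$ at $m=n$ independently of $\sigma$.

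The obstacle is that the terms with $n>T^{1/(2c)}$ are paired only with $m\le T^{1/(2c)}$. For these I need $|\log(n/m)|\gg T^{-1/(2c)}$ uniformly, which holds because $m$ is bounded and $n$ is an integer. However, the sum over $n$ must converge absolutely on $\Ree s=2$ with controlled coefficients. I would bound $|b(n)|$ via $F=\sum a(n)n^{-s}$, which converges absolutely at $\sigma=2$ with $\sum|a(n)|n^{-2}\ll1$, since the denominator in \eqref{eq:F} is bounded away from $0$ at $\sigma=2$. Then $b=a*r$ has $\sum|b(n)|n^{-2}\ll T^{1/(2c)}$. Combined with the Gaussian decay $\exp(-T^{2\delta})$ of each off-diagonal term, this gives an $o(1)$ contribution.

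Finally, the weight outside $[3T/4,9T/4]$ has mass $\ll e^{-(\log T)^2/4}$, and $|FR\overline Q|\ll T^{O(1)}$ there. So truncating the integral to the rectangle costs $o(1)$, which completes \eqref{eq:mixed}.
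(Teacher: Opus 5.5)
Your proposal is correct and follows essentially the same route as the paper. The common steps are: shift to $\Ree s=2$ with the holomorphic Gaussian weight $W((s-1/2)/i)$ using Lemma~\ref{lem:bound}; read off $b(n)=q(n)$ for $n\le T^{1/(2c)}$ from \eqref{eq:identity}; discard the off-diagonal terms by Gaussian decay, using $|\log(m/n)|\gg T^{-1/(2c)}$ with $c>1/2$; and obtain \eqref{eq:coeffnorms} by partial summation. Your polynomial bound on $\sum|b(n)|n^{-2}$ for the terms $n>T^{1/(2c)}$ is cruder than needed, since $\sum_m|r(m)|m^{-2}\le\zeta(2)$ gives $\sum|b(n)|n^{-2}\ll1$ as the paper states, but it suffices given the super-polynomial Gaussian decay.
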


Note that both inequalities $1/2<c<\mu$ are used in Lemma~\ref{lem:mean}.
The condition $c<\mu$ excludes poles of $F$ during the contour
shift, by Lemma~\ref{lem:bound}, whilst the condition $c>1/2$ gives
$T^{1/(2c)}=o(T/\log T)$, making the off-diagonal terms negligible.
We remark that neither the coefficient identity~\eqref{eq:identity} nor the
evaluations~\eqref{eq:Rnorm}--\eqref{eq:coeffnorms} require the
gap assumption. With this Lemma in hand, we can now obtain our desired contradiction.

\begin{proof}[Proof of Theorem~\ref{thm:main}]
Suppose that $\mu>1/2$ and choose $c$ as in~\eqref{eq:c}.
The two mean squares in Lemma~\ref{lem:mean} are $O(\log T)$,
since $|r(n)|,|q(n)|\leq1$. Using Lemma~\ref{lem:bound} and
Cauchy--Schwarz, we obtain
\begin{equation}\label{eq:normcomparison}
 \begin{aligned}
 \sum_{n\leq T^{1/(2c)}}\frac{q(n)^2}{n}
 &=\left|\int_{3T/4}^{9T/4}F(1/2+it)R(1/2+it)
              \overline{Q(1/2+it)}W(t)\,dt\right|+o(1)\\
 &\leq\left(1+\frac{6\pi}{\log T}\right)
   \left(\int_\R|R(1/2+it)|^2W(t)\,dt\right)^{1/2}\\
 &\qquad\times
   \left(\int_\R|Q(1/2+it)|^2W(t)\,dt\right)^{1/2}+o(1)\\
 &\leq\frac12\sum_{n\leq T^{1/(2c)}}
               \frac{r(n)^2+q(n)^2}{n}+O(1).
 \end{aligned}
\end{equation}
Thus the difference of the coefficient sums is bounded above.
But~\eqref{eq:coeffnorms} gives
\[
 \sum_{n\leq T^{1/(2c)}}\frac{q(n)^2-r(n)^2}{n}
   =-\frac{6\cos(\pi c)}{\pi^3c}\log T+O(1)
   \longrightarrow+\infty,
\]
since $1/2<c<1$. This is a contradiction.
\end{proof}

\section{Proof of lemmas}

In this section we will prove Lemma~\ref{lem:bound} and
Lemma~\ref{lem:mean}.

\begin{proof}[Proof of Lemma~\ref{lem:bound}]
Recall that
\[
 \begin{aligned}
 F(s)&=-i\,
 \frac{e^{i\pi(c-1/2)}\zeta(s+i\pi c/\log T)
                       -\zeta(s-i\pi c/\log T)}
      {e^{i\pi(c-1/2)}\zeta(s+i\pi c/\log T)
                       +\zeta(s-i\pi c/\log T)}\\
 &=\tan\left(\frac\pi2\left(z(s)-\frac12\right)\right),
 \end{aligned}
\]
where
\[
 z(s)=c+\frac1{\pi i}
       \int_{s-i\pi c/\log T}^{s+i\pi c/\log T}
             \frac{\zeta'}{\zeta}(w)\,dw.
\]
We shall bound $F$ by estimating $\Ree z(s)$.
Write $s=1/2+u+it$, with $0<u\leq3/2$ and
$3T/4\leq t\leq9T/4$. Under RH, the Hadamard product and
Stirling's formula give
\begin{equation}\label{eq:logderivative}
 \Ree\frac{\zeta'}{\zeta}(1/2+u+iv)
   =\sum_\gamma\frac{u}{u^2+(v-\gamma)^2}
       -\frac12\log\frac{v}{2\pi}+O(1/T),
\end{equation}
uniformly for $T/2\leq v\leq3T$; see~\cite[\S2]{Sound}.
Integrating between the two shifts and using the definition of
$N_c(v)$, we obtain
\begin{equation}\label{eq:completion}
 \Ree z(s)
   =\int_\R\frac{u\,N_c(v)}{\pi\bigl(u^2+(t-v)^2\bigr)}\,dv
     +\frac{c}{\log T}\log\frac{2\pi T}{t}
     +O\left(\frac{1}{T\log T}\right).
\end{equation}
The interchange with the sum over zeros is justified by
nonnegativity, and the implied constants are absolute for
$1/2<c<1$.

Here we use $c<\mu$: by~\eqref{eq:binary}, $0\leq N_c(v)\leq1$
on $[T/2,3T]$. Outside this interval the Riemann--von Mangoldt
formula gives $N_c(v)\ll\log(|v|+3)$, and hence
\[
 \int_{\R\setminus[T/2,3T]}
       \frac{u\,N_c(v)}{\pi\bigl(u^2+(t-v)^2\bigr)}\,dv
 \ll\frac{\log T}{T}.
\]
The Poisson kernel is nonnegative and has integral one, so the
integral in~\eqref{eq:completion} lies between zero and
$1+O(\log T/T)$. Moreover,
\[
 0<c\log\frac{2\pi T}{t}
   \leq c\log\frac{8\pi}{3}<3c.
\]
It follows that, for sufficiently large $T$,
\begin{equation}\label{eq:strip}
 -\frac{3c}{\log T}\leq\Ree z(s)\leq1+\frac{3c}{\log T}.
\end{equation}
Thus the strip estimate holds with the explicit constant $C=3c$.
The error terms above are uniform for $1/2<c<1$. Since $c<\mu$,
\eqref{eq:binary} holds for all sufficiently large $T$.

The strip in~\eqref{eq:strip} excludes the poles
$z=3/2+2k$, $k\in\mathbb Z$, of the tangent in~\eqref{eq:tangent}.
Thus $F$ is holomorphic, and for sufficiently large $T$,
\[
 |F(s)|
   \leq\tan\left(\frac\pi4+\frac{3\pi c}{2\log T}\right)
   \leq1+\frac{6\pi}{\log T}.
\]
Finally, the quotient defining $F$ is meromorphic
across the critical line, and the bound from the right excludes
a pole on that line. Any apparent singularities there are
therefore removable, and the same estimate holds on the closed
rectangle.
\end{proof}

\begin{proof}[Proof of Lemma~\ref{lem:mean}]
On $\Ree s=2$, write $F(s)R(s)=\sum_{m\geq1}f(m)m^{-s}$,
where $\sum_m|f(m)|m^{-2}\ll1$ and, by~\eqref{eq:identity},
$f(n)=q(n)$ for $n\leq T^{1/(2c)}$. Put
\[
 H(s)=F(s)R(s)Q(1-s)
             W\left(\frac{s-1/2}{i}\right).
\]
Lemma~\ref{lem:bound} permits us to move the contour to
$\Ree s=2$, because $F$, and hence $H$, is holomorphic in the rectangle
\[
 \left\{s\in\mathbb C:
   \frac12<\Ree s\leq2,\quad
   \frac{3T}{4}\leq\Imm s\leq\frac{9T}{4}\right\}
\]
and continuous up to its left edge. The horizontal integrals and
the tails are negligible by Gaussian decay, so
\begin{equation}\label{eq:contourshift}
 \frac1i\int_{1/2+3iT/4}^{1/2+9iT/4}H(s)\,ds
   =\frac1i\int_{2-i\infty}^{2+i\infty}H(s)\,ds+o(1).
\end{equation}
For $x>0$, the Gaussian integral gives
\begin{equation}\label{eq:kernel}
 \frac1i\int_{2-i\infty}^{2+i\infty}
       x^{-s}W\left(\frac{s-1/2}{i}\right)\,ds
   =x^{-1/2-3iT/2}
       \exp\left(-\frac{T^2\log^2x}{2(\log T)^2}\right).
\end{equation}
Hence
\begin{equation}\label{eq:diagonalsum}
 \begin{aligned}
 \frac1i\int_{2-i\infty}^{2+i\infty}H(s)\,ds
 &=\sum_{n\leq T^{1/(2c)}}\sum_{m\geq1}
       \frac{f(m)q(n)}{\sqrt{mn}}
       \left(\frac mn\right)^{-3iT/2}
       \exp\left(-\frac{T^2\log^2(m/n)}{2(\log T)^2}\right)\\
 &=\sum_{n\leq T^{1/(2c)}}\frac{q(n)^2}{n}+o(1).
 \end{aligned}
\end{equation}
Here the off-diagonal terms are $o(1)$ by Gaussian decay, since
$|\log(m/n)|\geq(2T^{1/(2c)})^{-1}$ for $m\ne n$ and $c>1/2$.
Now~\eqref{eq:mixed} follows from~\eqref{eq:contourshift}
and~\eqref{eq:diagonalsum}, and the same calculation
gives~\eqref{eq:Rnorm} and~\eqref{eq:Qnorm}.

Finally, partial summation with
$\sum_{n\leq x}\mu(n)^2/n=(6/\pi^2)\log x+O(1)$ gives
\[
 \begin{aligned}
 \sum_{n\leq T^{1/(2c)}}\frac{q(n)^2-r(n)^2}{n}
 &=-\frac6{\pi^2}\log T\int_0^{1/(2c)}
              \sin(\pi c+2\pi cu)\,du+O(1)\\
 &=-\frac{6\cos(\pi c)}{\pi^3c}\log T+O(1),
 \end{aligned}
\]
which proves~\eqref{eq:coeffnorms}.
\end{proof}

\end{document}